\title{\LARGE \bf On Robustness Properties in Empirical Centroid Fictitious Play}
\author{BRIAN SWENSON$^{\dagger*}$, SOUMMYA KAR$^\dagger$ AND JO\~{A}O XAVIER$^\star$\thanks{The work was partially supported by the FCT project FCT [UID/EEA/50009/2013] through the Carnegie Mellon/Portugal Program managed by ICTI from FCT and by FCT Grant CMU-PT/SIA/0026/2009, and was partially supported by NSF grant ECCS-1306128. \newline$^\dagger$Department of Electrical and Computer Engineering, Carnegie Mellon University, Pittsburgh, PA 15213, USA (brianswe@andrew.cmu.edu and soummyak@andrew.cmu.edu).\newline $^\star$Institute for Systems and Robotics (ISR), Instituto Superior Tecnico (IST), Technical University of Lisbon, Portugal (jxavier@isr.ist.utl.pt).}}
\newtheorem{theorem}{Theorem}
\newtheorem{lemma}{Lemma}
\newtheorem{prop}{Proposition}
\newtheorem{assumption}{A.}
\newtheorem{definition}{Definition}
\begin{document}

\maketitle
\thispagestyle{empty}
\begin{abstract}
Empirical Centroid Fictitious Play (ECFP) is a generalization of the well-known Fictitious Play (FP) algorithm designed for implementation in large-scale games. In ECFP, the set of players is subdivided into equivalence classes with players in the same class possessing similar properties. Players choose a next-stage action by tracking and responding to aggregate statistics related to each equivalence class. This setup alleviates the difficult task of tracking and responding to the statistical behavior of every individual player, as is the case in traditional FP.
Aside from ECFP, many useful modifications have been proposed to classical FP, e.g., rules allowing for network-based implementation, increased computational efficiency, and stronger forms of learning. Such modifications tend to be of great practical value; however, their effectiveness relies heavily on two fundamental properties of FP: robustness to alterations in the empirical distribution step size process, and robustness to best-response perturbations. The main contribution of the paper is to show that similar robustness properties also hold for the ECFP algorithm. This result serves as a first step in enabling practical modifications to ECFP, similar to those already developed for FP.
\end{abstract}

\section{Introduction}
The field of learning in games is concerned with the study of systems of interacting agents, and in particular, the question of how simple behavior rules applied at the level of individual agents can lead to desirable global behavior. Fictitious Play (FP) \cite{Brown51} is one of the best studied game-theoretic learning algorithms. While attractive for its intuitive simplicity and proven convergence results, certain practical issues make FP prohibitively difficult to implement in games with a large number of players\cite{marden06,Lambert01,Swenson-MFP-Asilomar-2012,swenson2012ECFP}.

Empirical Centroid FP (ECFP) \cite{Swenson-MFP-Asilomar-2012,swenson2012ECFP} is a recently proposed generalization of FP designed for implementation in large games. In ECFP, the set of players is subdivided into sets of ``equivalence classes'' of players sharing similar properties.
In this formulation, players only track and respond to an aggregate statistic (the empirical centroid) for each class of players, rather than tracking and responding to statistical properties of every individual player, as in classical FP.
ECFP has been shown to learn elements of the set of symmetric Nash equilibria for the class of multi-player games known as potential games.

The main focus of this paper will be to study ECFP and show that certain desirable properties possessed by classical FP also hold for the more general ECFP. In particular, the work \cite{leslie2006generalised} studied classical FP and proved that the fundamental learning properties of FP can be retained in the following scenarios:\\
(i) The step size sequence of the empirical distribution process takes on a form other than $\{1/t\}_{t\geq1}$.\\
(ii) Players are permitted to make suboptimal choices when choosing  a next-stage action so long as the degree of suboptimality decays asymptotically to zero with time.

We say a FP-type algorithm is \emph{step-size robust} if it retains its fundamental learning properties in the first scenario, and we say an algorithm is \emph{best-response robust} if it retains its fundamental learning properties in the second scenario.

The notion of step-size robustness generalizes the concept of the empirical distribution of classical FP. A player's empirical distribution in classical FP is taken to be the time-averaged histogram of the player's action history; implicitly, this has an incremental step size of $1/t$. Scenario (i) allows players to choose alternate step-size sequences. Of particular interest is that it allows for construction of an empirical distribution that places more emphasis on recent observations while discounting observations from the distant past.

The notion of best-response robustness generalizes FP by relaxing the traditional assumption that players are always perfect optimizers. In particular, in classical FP, it is assumed that players are capable of choosing their next-stage action as a (precise) best response to the empirical action history of opposing players.
In practice, this is a stringent assumption, requiring that players have perfect knowledge of the empirical distribution of all opposing players at all times, and are capable of precisely solving a (non-trivial) optimization problem each iteration of the algorithm. By relaxing this implicit assumption slightly (as in scenario (ii)), one is able to consider many useful extensions of FP of both practical and theoretical value.

In \cite{leslie2006generalised}, the best-response robustness of FP was used to show convergence to the set of Nash equilibria of stochastic FP with vanishing smoothing, and to prove convergence of an FP-inspired actor-critic learning algorithm.
In \cite{Lambert01}, best-response robustness of FP was used to show convergence of sampled FP---a variant of FP in which computational complexity is mitigated by approximating the expected utility using a Monte-Carlo method---and used again in \cite{swenson2015CESFP} to ensure convergence of an even more computationally efficient version of sampled FP. In \cite{swenson2015StrongFP}, the best-response robustness of FP is used to construct a variant of FP achieving a strong form of learning in which the player's period-by-period strategies are guaranteed to converge to equilibrium (rather than only convergence in terms of the empirical frequencies, as is typical in FP).
The best-response robustness of FP is also useful in that it allows for practical network-based implementations of FP; e.g., \cite{swenson2012ECFP}.


The main contribution of this paper is to demonstrate that ECFP is both step-size robust and best-response robust; i.e., ECFP retains its fundamental learning properties under scenarios (i) and (ii) above.  This result is a necessary first step in order to develop practical modifications for ECFP similar in spirit to those already developed for FP; e.g., improved computational efficiency, network-based implementation rules, and strongly convergent variants of the algorithm, as mentioned above.\footnote{The results of this paper are directly applied in \cite{swenson2015StrongFP} to prove a strong learning result for a variant of ECFP. We also note that one possible network-based implementation of ECFP has been presented in \cite{swenson2012ECFP}. This implementation---which considers a fixed communication graph topologies and synchronous communication rules---relies on a weak form of best-response robustness (see \cite{swenson2012ECFP}, \textbf{A.3}). In order to consider ECFP in more general distributed scenarios (e.g., random communication graph topology and asynchronous communication rules) it is necessary to have the full robustness property derived in this paper.}
We prove the result following a similar line of reasoning to \cite{leslie2006generalised,benaim2005stochastic}; we first study a continuous-time version of ECFP, and then use results from the theory of stochastic approximations to prove our main result regarding convergence of discrete-time ECFP based on properties of the continuous-time counterpart.


The remainder of the paper is organized as follows. Section \ref{sec_prelims} sets up the notation to be used in the subsequent development and reviews the classical FP algorithm. Section \ref{sec_DT_ECFP} presents discrete-time ECFP and states the main result. Section \ref{sec_CTS} reviews relevant results in differential inclusions and stochastic approximations to be used in the proof of the main result. Section \ref{sec_CT_ECFP} presents continuous-time ECFP. Section \ref{sec_DT_convergence} proves convergence of discrete-time ECFP using properties of continuous-time ECFP. Section \ref{sec_conclusion} provides concluding remarks.

\section{Preliminaries}
\label{sec_prelims}
\subsection{Game Theoretic Preliminaries}
A review of game-theoretic learning algorithms---including classical FP---can be found in \cite{marden-shamma2013,young2004strategic}.

A normal form game is given by the triple $\Gamma = (N,(Y_i)_{i\in N},(u_i(\cdot))_{i\in N})$, where $N = \{1,\ldots,n\}$ represents the set of players, $Y_i$---a finite set of cardinality $m_i$---denotes the action space of player $i$ and $u_i(\cdot):\prod_{i=1}^n Y_i \rightarrow \mathbb{R}$ represents the utility function of player $i$.

Throughout this paper we assume:
\begin{assumption}
\label{a_iden_interests}
All players use identical utility functions.
\end{assumption}
Under this assumption we drop the subscript $i$ and denote by $u(\cdot)$ the utility function used by all players. The set of mixed strategies for player $i$ is given by $\Delta_i = \{p \in \mathbb{R}^{m_i}:\sum_{k=1}^{m_i} p(k) = 1,~p(k) \geq 0~\forall k=1,\ldots,m_i \}$, the $m_i$-simplex.   A mixed strategy $p_i \in \Delta_i$ may be thought of as a probability distribution from which player $i$ samples to choose an action. The set of joint mixed strategies is given by $\Delta^n = \prod_{i=1}^n \Delta_i$. A joint mixed strategy is represented by the $n$-tuple $\left(p_1,\ldots,p_n \right)$, where $p_i\in \Delta_i$ represents the marginal strategy of player $i$, and it is implicity assumed that players' strategies are independent.


The mixed utility function is given by $U(\cdot):\Delta^n \rightarrow \mathbb{R}$, where,
\begin{equation}
U(p_1,\ldots,p_n) := \sum\limits_{y \in Y} u_i(y)p_1(y_1)\ldots p_n(y_n).
\label{mixed_U}
\end{equation}
Note that $U(\cdot)$ may be interpreted as the expected value of $u(y)$ given that the players' mixed strategies are statistically independent. For convenience, the notation $U(p)$ will often be written as $U(p_i,p_{-i})$, where $p_i \in \Delta_i$ is the mixed strategy for player $i$, and $p_{-i}$ indicates the joint mixed strategy for all players other than $i$.

For $\epsilon \geq 0$, $i\in N$ and $p_{-i} \in \Delta_{-i}$, define the $\epsilon$-best response set for player $i$ as
$$BR_i^\epsilon(p_{-i}) := \{p_i \in \Delta_i: U(p_i,p_{-i}) \geq \max_{\alpha_i \in \Delta_i} U(\alpha_i,p_{-i}) - \epsilon\}$$
and for $p\in \Delta$ define
\begin{equation}
\label{def_BR}
BR^\epsilon(p) := (BR_1^\epsilon(p_{-1}),\ldots,BR_n^\epsilon(p_{-n})).
\end{equation}
The set of Nash equilibria is given by
\begin{equation}
NE := \{p \in \Delta^n:~ U(p_i,p_{-i}) \geq U(p_i',p_{-i}), \forall p_i' \in \Delta_i,~\forall i\}.
\end{equation}

As a matter of convention, all equalities and inequalities involving random objects are to be interpreted almost surely (a.s.) with respect to the underlying probability measure, unless otherwise stated.

\subsection{Repeated Play}
\label{sec_rep_play}
The learning algorithms considered in this paper assume the following format of repeated play.

Let a normal form game $\Gamma$ be fixed. Let players repeatedly face off in the game $\Gamma$, and for $t\in\{1,2,\ldots\}$, let $a_i(t)\in\Delta_i$ denote the action played by player $i$ in round $t$.\footnote{An action is usually assumed to be pure strategy, or a vertex of the simplex $\Delta_i$. In this work, an action is permitted to be an arbitrary mixed strategy (cf. \cite{leslie2006generalised}, for the case of FP).  Since the results hold for any actions of this form, they also hold for the typical case where actions are restricted to be pure strategies.} Let the $n$-tuple $a(t) = (a_1(t),\ldots,a_n(t))$ denote the joint action at time $t$.

Denote by $q_i(t) \in \Delta_i$, the \emph{empirical distribution}\footnote{The term \emph{empirical distribution} is often used to refer explicitly to the time-averaged histogram of the action choices of some player $i$; i.e., $q_i(t) = \frac{1}{t}\sum_{s=1}^t a_i(s)$. However, using a broader definition as considered here, allows for interesting algorithmic generalizations; e.g., learning processes that place greater emphasis on observations of more recent actions. See \cite{leslie2006generalised} for further discussion.} of player $i$. The precise manner in which the empirical distribution is formed will depend on the algorithm at hand. In general, $q_i(t)$ is formed as a function of the action history $\{a_i(s)\}_{s=1}^t$ and serves as a compact representation of the action history of player $i$ up to and including the round $t$. The joint empirical distribution is given by $q(t) := (q_1(t),\ldots,q_n(t))$.

\subsection{Classical Fictitious Play}
\label{sec_FP}
FP may be intuitively described as follows. Players repeatedly face off in a stage game $\Gamma$. In any given stage of the game, players choose a next-stage action by assuming (perhaps incorrectly) that opponents are using stationary and independent strategies. In particular, let the empirical distribution be given by the time-averaged histogram
\begin{equation}
\label{def_qt_FP}
q_i(t):= \frac{1}{t} \sum_{s=1}^t a_i(s);
\end{equation}
in FP, players use the empirical distribution of each opponent's past play as a prediction of the opponent's behavior in the upcoming round and choose a next-round strategy that is optimal (i.e., a best response) given this prediction.

A sequence of actions $\{a(t)\}_{t\geq 1}$ such that\footnote{In all learning algorithms discussed in this paper, the initial action $a_i(1)$ may be chosen arbitrarily for all $i$.\label{footnote_initial_cond}}
\begin{equation}
a_i(t+1) \in BR_i(q_{-i}(t)),~\forall i,
\label{FP_BR}
\end{equation}
\noindent for all $t\geq 1$, is referred to as a \emph{fictitious play process}. It has been shown that FP achieves Nash equilibrium learning in the sense that $d(q(t),NE) \rightarrow 0$ as $t\rightarrow \infty$ for select classes of games including two-player zero-sum games \cite{robinson1951iterative}, two-player two-move games \cite{miyasawa1961convergence}, and multi-player potential games \cite{Mond01},\cite{Mond96}.

\subsection{Empirical Centroid FP Setup}
A presentation of ECFP in it's most elementary form (i.e., all players are grouped into a single equivalence class) is given in \cite{swenson2012ECFP}; the elementary formulation is less notationally involved, and can serve as a useful means of conveying the basic ideas of the approach in a straightforward manner. In this paper we focus on the general formulation of the ECFP algorithm.

In ECFP, players are grouped into sets of equivalence classes, or ``permutation invariant'' classes. Such grouping allows players to analyze collective behavior by tracking only the statistics of each equivalence class, rather than tracking the statistics of every individual player.

Let $m\leq n$, denote the number of classes, let $I = \{1,\ldots,m\}$ be an index set, and let $\mathcal{C}=\{C_1,\ldots,C_m\}$ be a collection of subsets of $N$; i.e. $C_k \subseteq N, ~\forall k\in I$. A collection $\mathcal{C}$ is said to be a \textit{permutation-invariant partition} of $N$ if,\\
$(i)$ $C_k \cap C_\ell = \emptyset$, for $k,\ell \in I$, $k\not= \ell$, \\
$(ii)$ $\bigcup\limits_{k\in I} C_k = N$,\\
$(iii)$ for $k\in I$, $i,j\in C_k$, $Y_i = Y_j$,\\
$(iv)$ for $k\in I$, $i,j\in C_k$, there holds for any strategy profile $y = (y_i,y_j,y_{-(i,j)}) \in Y$,
$$ u(y_i,y_j,y_{-(i,j)}) = u([y_j]_i,[y_i]_j,y_{-(i,j)}),$$
where the notation $([y_i]_j,[y_j]_i,y_{-(i,j)})$ indicates a permutation of (only) the strategies of players $i$ and $j$ in the strategy profile $y =(y_i,y_j, y_{-(i,j)})$.

For a collection $\mathcal{C}$, define $\phi(\cdot):N\rightarrow I$ to be the unique mapping such that $\phi(i) = k$ if and only if $i\in C_k$.

For $k\in I$, and $p\in \Delta^n$, and permutation-invariant partition $\mathcal{C}$,  define
\begin{equation}
\label{def_partial_centroid}
\bar p^k := |C_k|^{-1}\sum_{i\in C_k} p_i
\end{equation}
to be the \textit{$k$-th centroid} with respect to $\mathcal{C}$, where $|C_k|$ denotes the cardinality of the set $C_k$. Likewise for $p\in \Delta^n$ define
\begin{equation}
\label{def_centroid}
\bar p := (\bar p_1, \bar p_2,\ldots, \bar p_n),
\end{equation}
where $\bar p_i := \bar p ^{\phi(i)}$, to be the \textit{centroid distribution} with respect to $\mathcal{C}$.

Given a permutation-invariant partition $\mathcal{C}$, let the set of symmetric Nash equilibria (relative to $\mathcal{C}$) be given by,
\begin{equation}
SNE := \{p\in NE: p_i = p_j~ \forall i,j \in C_k,~\forall~ k\in I\},
\end{equation}
and let the set of mean-centric equilibria (relative to $\mathcal{C}$) be given by,
\begin{equation}
MCE := \{p\in \Delta^n:~ U(p_i,\bar p_{-i}) \geq U(p_i',\bar p_{-i}), \forall p_i' \in \Delta_i,~\forall i\}.
\end{equation}
The set of MCE is neither a strict superset nor subset of the NE---rather, it is a set of natural equilibrium points tailored to the ECFP dynamics \cite{swenson2013MCE}. The set of SNE however, is contained in the set of MCE.

The sets of SNE and MCE relative to a partition $\mathcal{C}$ can be shown to be non-empty under A.\ref{a_iden_interests} using fixed point arguments similar to \cite{Cheng04,swenson2013MCE}.

\section{Empirical Centroid Fictitious Play}
\label{sec_DT_ECFP}

Let the game $\Gamma$ be played repeatedly as in Section \ref{sec_rep_play}. Let the empirical distribution for player $i$ be formed recursively with $q_i(1) = a_i(1)$ and for $t\geq1$,
\begin{equation}
\label{def_qt}
q_i(t+1) = q_i(t) + \gamma_t\left( a_i(t+1) - q_i(t)\right),
\end{equation}
where we assume:
\begin{assumption}
\label{a_step_size}
The sequence $\{\gamma_t\}_{t\geq 1}$ in \eqref{def_qt} satisfies $\gamma_t \geq 0,~\forall t$, $\sum_{t\geq 1} \gamma_t = \infty$, and $\lim_{t\rightarrow\infty} \gamma_t = 0$.
\end{assumption}
\noindent Let the joint empirical distribution be given by $q(t) := (q_1(t),\ldots,q_n(t))$.

Typical FP-type learning algorithms\footnote{We use the term FP-type learning algorithm to refer to an algorithm in which players choose their next-stage action as a myopic best response to some forecast rule based on the current time-averaged empirical distribution of play; cf. the learning framework considered in \cite{jordan1993}.} consider the empirical distribution to be a time-averaged histogram that places equal weight on all rounds; this corresponds to a step size of form $\gamma_t = \frac{1}{t+1},\forall t$ (e.g., \eqref{def_qt_FP}). If a FP-type algorithm retains its fundamental learning properties under the more general assumption A.\ref{a_step_size}, then we say the algorithm is \emph{step-size robust}.



In ECFP \cite{swenson2012ECFP}, players do not track the empirical distribution of each individual player. Instead, they track only the centroid $\bar q^k(t)$ for each $k\in I$ (see \eqref{def_partial_centroid}).
Intuitively speaking, in ECFP each player $i$ assumes (perhaps incorrectly) that for each class $C_k\in\mathcal{C}$ the centroid $\bar q^k(t)$ accurately represents the mixed strategy for all players $j\in C_k$. Each player $i$ chooses her next-stage action as a myopic best response given this assumption.

Formally, the joint action at time $(t+1)$ is chosen according to the rule\footnote{The action $a(1)$ may be chosen arbitrarily.}
\begin{equation}
a(t+1) \in BR^{\epsilon_t}(\bar q(t)),
\label{dt_action_process}
\end{equation}
where $\bar q(t)$ is the centroid distribution associated with $q(t)$ (see \eqref{def_centroid}), and where it is assumed that,
\begin{assumption}
The sequence $\{\epsilon_t\}_{t\geq 1}$ in \eqref{dt_action_process} satisfies $\lim_{t\rightarrow\infty} \epsilon_t = 0$.
\label{a_epsilon}
\end{assumption}
Typical FP-type learning algorithms assume that players are always perfect optimizers; i.e., $\epsilon_t = 0,~\forall t$.
If a FP-type learning algorithm retains its fundamental learning properties under A.\ref{a_epsilon}, we say the algorithm is \emph{best-response robust}. The work \cite{leslie2006generalised} first considered generalizations to FP of the forms indicated in A.\ref{a_step_size}--A.\ref{a_epsilon} and showed that classical FP is both step-size robust and best-response robust. In this work we show that ECFP is also robust in both these senses.

Combining \eqref{def_qt} with \eqref{dt_action_process}, gives the following difference inclusion governing the behavior of $\{q(t)\}_{t\geq 1}$,
\begin{equation}
q(t+1) \in \left(1 - \gamma_t\right)q(t) + \gamma_t BR^{\epsilon_t}(\bar q(t)).
\label{dt_ecfp}
\end{equation}
Likewise, Lemma \ref{DT_centroid_lemma} (see appendix) shows that the sequence of centroid distributions $\{\bar q(t)\}_{t\geq 1}$ follows the difference inclusion,
\begin{equation}
\bar q(t+1) \in \left(1 - \gamma_t \right)\bar q(t) + \gamma_t BR^{\epsilon_t}(\bar q(t)).
\label{dt_centroid_process}
\end{equation}
We refer to the sequence $\{q(t),\bar q(t)\}_{t\geq 1}$ as a discrete-time ECFP (DT-ECFP) process with respect to $(\Gamma, \mathcal{C})$.

The following theorem is the main result of the paper---it states that, if $\Gamma$ is an identical interests game, then under the relatively weak assumptions A.\ref{a_step_size}--A.\ref{a_epsilon}, players engaged in ECFP asymptotically learn elements of sets of SNE and MCE. Learning of MCE occurs in the sense that $d(q(t),MCE) \rightarrow 0$---this form of learning corresponds to the typical notion of setwise \emph{convergence in empirical distribution} typical in classical FP (see Section \ref{sec_FP} and \cite{fudenberg1998theory},\cite{young2004strategic}). Learning of SNE occurs in the sense that $d(\bar q(t),SNE) \rightarrow 0$. This notion of learning, while similar in spirit to the typical notion of convergence in empirical distribution, differs in that it is the empirical centroid distribution \eqref{def_centroid} that is converging to the set of SNE, rather than the empirical distribution itself.

\begin{theorem}
Assume A.\ref{a_iden_interests}--A.\ref{a_epsilon} hold. Let $\mathcal{C}$ be a permutation-invariant partition of the player set $N$. Let $\{q(t),\bar q(t)\}_{t\geq 1}$ be an ECFP process with respect to $(\Gamma,\mathcal{C})$. Then,\\
(i) players learn a subset of the MCE in the sense that $\lim_{t\rightarrow\infty} d(q(t),MCE) = 0$,\\
(ii) players learn a subset of the SNE in the sense that $\lim_{t\rightarrow\infty} d(\bar q(t),SNE) = 0$.
\label{main_result}
\end{theorem}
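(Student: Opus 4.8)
The plan is to follow the standard two-timescale stochastic-approximation approach for FP-type dynamics, as in \cite{leslie2006generalised,benaim2005stochastic}, but applied to the centroid process \eqref{dt_centroid_process}. First I would observe that, because of Assumption A.\ref{a_iden_interests} and the permutation-invariance of $\mathcal{C}$, the map $p \mapsto U(p_i,\bar p_{-i})$ depends on $p$ only through the centroid $\bar p$, so the restricted best-response set $BR^{\epsilon}(\bar q)$ for players engaged in ECFP is governed entirely by the finite-dimensional object $\bar q \in \prod_{k \in I} \Delta_{C_k}$ (one simplex per class). Thus \eqref{dt_centroid_process} is a self-contained difference inclusion on this lower-dimensional space, and Assumptions A.\ref{a_step_size}--A.\ref{a_epsilon} put it in exactly the form required to apply the stochastic-approximation-for-differential-inclusions machinery reviewed in Section \ref{sec_CTS}: the interpolated process is an asymptotic pseudo-trajectory of the associated differential inclusion
\begin{equation}
\dot{\bar q} \in BR^{0}(\bar q) - \bar q,
\label{ct_centroid_di}
\end{equation}
which is the continuous-time ECFP dynamics to be introduced in Section \ref{sec_CT_ECFP}.

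The next step is to construct a Lyapunov function for \eqref{ct_centroid_di}. Since $\Gamma$ is an identical-interests game, the natural candidate is (a smoothed or exact version of) the potential evaluated along centroids, e.g. $V(\bar q) := \max_{\alpha} U(\alpha, \bar q_{-i}) $-type ``regret'' functionals, or more precisely the function $w(\bar q) = \sum_i \big( \max_{\alpha_i \in \Delta_i} U(\alpha_i, \bar q_{-i}) - U(\bar q_i, \bar q_{-i}) \big)$ adapted to the MCE/SNE setting. I would show that $V$ strictly increases along non-stationary solutions of \eqref{ct_centroid_di} and that its set of zeros (equivalently, the set of rest points / chain-recurrent set of the inclusion) coincides with the set of centroid profiles corresponding to SNE. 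Invoking the convergence theorem for asymptotic pseudo-trajectories, this yields $d(\bar q(t), SNE) \to 0$, which is part (ii). Care is needed here because best responses are restricted to the centroid argument; one has to verify that rest points of \eqref{ct_centroid_di} are precisely symmetric-within-class profiles that are Nash, i.e. SNE, using the permutation-invariance properties $(iii)$--$(iv)$ of $\mathcal{C}$ and a convexity/concavity argument on $U$.

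For part (i), I would then lift the convergence of the centroid back to the full empirical distribution $q(t)$. The point is that each $q_i(t)$ evolves by \eqref{dt_ecfp} with the same averaging coefficients $\gamma_t$ and the same target set $BR^{\epsilon_t}(\bar q(t))$, and since $\bar q(t) \to SNE$ (hence into a neighborhood where $BR^{0}(\bar q(t))$ stabilizes to best responses against near-equilibrium centroids), the individual empirical distributions $q_i(t)$ must asymptotically track $BR_i^{0}(\bar q_{-i}(t))$, which by definition of MCE forces $d(q(t), MCE) \to 0$. More carefully, one shows directly from \eqref{dt_ecfp} that any limit point of $q(t)$ must satisfy the MCE inequality by a standard argument: the Cesàro-type averaging in \eqref{dt_ecfp} means $q_i(t)$ is asymptotically a weighted average of $\epsilon_s$-best responses to $\bar q(s)$ for large $s$, and $\epsilon_s \to 0$ together with $\bar q(s) \to SNE \subseteq MCE$ and continuity/closedness of $BR^{0}(\cdot)$ closes the argument.

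The main obstacle I anticipate is not the stochastic-approximation step (that is by now routine once the differential inclusion and its regularity are set up) but rather the analysis of \eqref{ct_centroid_di}: establishing the right Lyapunov function and, in particular, proving that the chain-recurrent set of the centroid dynamics is exactly (the centroid image of) the SNE, and that $BR^{0}(\bar q)$ has the closed-graph, convex-valued, locally-bounded properties needed — all of which hinge on correctly exploiting the permutation-invariance hypotheses on $\mathcal{C}$ to reduce the asymmetric best-response map to a well-behaved map on the product of per-class simplices. A secondary technical point is handling actions that are arbitrary mixed strategies rather than pure strategies (as flagged in the repeated-play footnote), but this only affects the noise/perturbation term and is absorbed into the $\gamma_t BR^{\epsilon_t}$ formulation without difficulty.
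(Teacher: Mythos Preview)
Your treatment of part (ii) is essentially the paper's approach: cast \eqref{dt_centroid_process} as a perturbed solution of the centroid differential inclusion, use a Lyapunov function to locate the chain-recurrent set inside SNE, and conclude via the Bena\"im--Hofbauer--Sorin machinery. One caution: the specific Lyapunov candidate you write down, the regret functional $w(\bar q)=\sum_i\big(\max_{\alpha_i}U(\alpha_i,\bar q_{-i})-U(\bar q_i,\bar q_{-i})\big)$, is \emph{not} what works here; it is not monotone along best-response trajectories in general. The paper uses the potential itself, $W=-U(\bar q)$, and shows $\frac{d}{dt}U(\bar q^c(t))\geq \sum_i\big(\max_{\alpha_i}U(\alpha_i,\bar q^c_{-i})-U(\bar q^c)\big)\geq 0$ via multilinearity. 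You do mention ``the potential evaluated along centroids'' as the natural candidate, which is correct; just discard the regret formula.

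For part (i) there is a genuine gap. Your lifting argument says: $q_i(t)$ is asymptotically an average of $\epsilon_s$-best responses to $\bar q_{-i}(s)$, and since $\bar q(s)\to SNE$ and $BR$ has closed graph, the limit points of $q_i$ must lie in $BR_i(\bar q^*_{-i})$. This fails when the limit set of $\{\bar q(t)\}$ is not a single point. If $\bar q(t)$ wanders over a connected subset $L\subset SNE$, then $q_i(t)$ is an average of elements of $\bigcup_{p\in L}BR_i(p_{-i})$, and there is no reason that a limit point $q_i^*$ lies in $BR_i(\bar q^*_{-i})$ for the \emph{particular} limiting centroid $\bar q^*$ --- averages of best responses to different profiles need not be a best response to any one of them. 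Closedness of the graph of $BR$ does not help with convex combinations across different base points.

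The paper avoids this entirely by treating $\{q(t)\}$ itself as a perturbed solution of the full differential inclusion $\dot q\in BR(\bar q)-q$ and running the \emph{same} Lyapunov/chain-transitivity argument directly on that inclusion. The key ingredient you are missing is the identity (Lemma~\ref{lemma_rearangement})
\[
V(p):=\frac{1}{n}\sum_{i=1}^n U(p_i,\bar p_{-i})=U(\bar p),
\]
which lets the potential $U(\bar p)$ serve as a Lyapunov function for the full process with rest-point set exactly MCE, and Sard's theorem applied to $U$ then gives that $V(MCE)$ has empty interior. This is both cleaner and actually correct; your sequential route (prove (ii), then lift to (i)) would require an additional argument ruling out non-singleton $\omega$-limit sets for $\bar q$, which is not available in general.
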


We note that if $\epsilon_t = 0$ and $\gamma_t = \frac{1}{t+1}$, then convergence of ECFP in the sense of Theorem \ref{main_result} was established in our prior work \cite{swenson2012ECFP}.

In order to prove Theorem \ref{main_result} in its full generality we follow the approach of \cite{leslie2006generalised,benaim2005stochastic}---we first study the set of continuous-time differential inclusions associated with ECFP, and then derive Theorem \ref{main_result} from the continuous-time results via tools from the theory of stochastic approximations.

In particular, Section \ref{sec_CTS} discusses the notion of a perturbed solution of a differential inclusion, introduces the notion of a chain transitive set, and presents key results that allow one to relate the limit sets of perturbed solutions to internally chain transitive sets of the associated differential inclusion. Section \ref{sec_CT_ECFP} then presents continuous-time ECFP (CT-ECFP) and shows convergence of CT-ECFP to the sets of SNE and MCE using Lyapunov arguments.

Section \ref{sec_DT_convergence} presents Lemmas \ref{lemma_limit_set_of_ECFP} and \ref{lemma_CTS_in_SNE} that relate the limit sets of DT-ECFP to the limit sets of CT-ECFP. Lemma \ref{lemma_limit_set_of_ECFP} shows that the limit sets of DT-ECFP are contained in the internally chain transitive sets of the corresponding CT-ECFP process. This is accomplished by first showing that DT-ECFP processes may be considered to be perturbed solutions of the associated CT-ECFP differential inclusion, and then invoking Theorem \ref{limit_set_thrm} to clinch the result. Lemma \ref{lemma_CTS_in_SNE} then shows that the internally chain transitive sets of CT-ECFP are contained in the sets of MCE and SNE. This is accomplished by invoking Proposition \ref{lyapunov_prop} together with the Lyapunov arguments derived for CT-ECFP processes in Section \ref{sec_CT_ECFP}.

The proof of Theorem \ref{main_result} then follows by combining Lemmas \ref{lemma_limit_set_of_ECFP} and \ref{lemma_CTS_in_SNE}, as noted in Section \ref{sec_proof_main_result}.


\section{Chain Transient Sets}
\label{sec_CTS}
We study the limiting behavior of DT-ECFP by first studying the behavior of a continuous-time version of ECFP, and then relating the limit sets of DT-ECFP to the limit sets of its continuous-time counterpart. In particular, we will relate the limit sets of DT-ECFP to the chain transitive sets of CT-ECFP. Following the approach of \cite{benaim2005stochastic},\cite{leslie2006generalised}, let $F$ denote a set-valued function mapping each point $\xi\in \mathbb{R}^m$ to a set $F(\xi) \in \mathbb{R}^m$. We assume:
\begin{assumption}
\label{a_F}
(i) $F$ is a closed set-valued map.\footnote{I.e., $\mbox{Graph}(F) := \{(\xi,\eta):\eta\in F(\xi) \} $ is a closed subset of $\mathbb{R}^m \times \mathbb{R}^m$.} \\
(ii) $F(\xi)$ is a nonempty compact convex subset of $\mathbb{R}^m$ for all $\xi\in \mathbb{R}^m$.\\
(iii) For some norm $\|\cdot\|$ on $\mathbb{R}^m$, there exists $c>0$ such that for all $\xi\in \mathbb{R}^m$, $\sup_{\eta\in F(\xi)}\|\eta\| \leq c(1+\|\xi\|).$
\end{assumption}

\begin{definition}
A solution for the differential inclusion $\frac{dx}{dt} \in F(x)$
with initial point $\xi \in \mathbb{R}^m$ is an absolutely continuous mapping $x:\mathbb{R}\rightarrow\mathbb{R}^m$ such that $x(0) = \xi$ and $\frac{dx(t)}{dt} \in F(x(t))$ for almost every $t\in \mathbb{R}$.
\end{definition}


\begin{definition}
Let $\|\cdot\|$ be a norm on $\mathbb{R}^m$, and let $F:\mathbb{R}^m\rightarrow \mathbb{R}^m$ be a set valued function satisfying A.\ref{a_F}.
Consider the differential inclusion
\begin{equation}
\frac{dx}{dt} \in F(x).
\label{CTS_diff_incl}
\end{equation}

(a) Given a set $X\subset\mathbb{R}^m$ and points $\xi$ and $\eta$, we write $\xi\hookrightarrow \eta$ if for every $\epsilon >0$ and $T>0$ there exist an integer $n\geq 1$, solutions $x_1,\ldots,x_n$ to the differential inclusion \eqref{CTS_diff_incl}, and real numbers $t_1,\ldots,t_n$ greater than $T$ such that\\
(i) $x_i(s)\in X$, for all $0\leq s \leq t_i$ and for all $i=1,\ldots,n$,\\
(ii) $\|x_i(t_i) - x_{i+1}(0)\| \leq \epsilon$ for all $i=1,\ldots,n-1$,\\
(iii) $\|x_1(0) - \xi\| \leq \epsilon$ and $\|x_n(t_n) - \eta\|\leq \epsilon.$

(b) $X$ is said to be internally chain transient if $X$ is compact and $\xi \hookrightarrow \xi$ for all $\xi\in X$.
\end{definition}

The following theorem from \cite{benaim2005stochastic} allows one to relate the set of limit points of certain discrete-time processes to the internally chain transient sets of their continuous-time counterparts.

\begin{theorem}
Assume $F:\mathbb{R}^m\rightarrow \mathbb{R}^m$ is a set valued function satisfying A.\ref{a_F}.
Let $\{x(t)\}_{t\geq 1}$ be a process satisfying
\begin{equation}
\label{def_perturbed_soln}
x({t+1}) - x({t}) - \alpha_{t+1} M({n+1}) \in \alpha_{t+1}F(x(t)),
\end{equation}
where $\{\alpha_t\}_{t\geq 1}$ is a sequence of non-negative numbers such that
$$ \sum_{t\geq 1} \alpha_t = \infty \quad \mbox{ and } \quad \lim_{t\rightarrow\infty} \alpha_t = 0,$$
and $\{M(t)\}_{t\geq 1}$ is a sequence of deterministic or random perturbations. If
\begin{align}
(a) & ~ \mbox{for all } T>0,\\
& \lim\limits_{t\rightarrow\infty}\sup\limits_{k} \left(\|\sum\limits_{i=t}^{k-1}\alpha_{i+1} M({i+1}) \|:\sum\limits_{i=t}^{k-1}\alpha_{i+1} \leq T \right) = 0,\\
(b) & ~ \sup_{t\geq1}\|x(t)\|<\infty,
\end{align}
then the set of limit points of $\{x(t)\}_{t\geq1}$ is a connected internally chain transitive set of the differential inclusion
$$\frac{d}{dt}x(t) \in F(x(t)).$$
\label{limit_set_thrm}
\end{theorem}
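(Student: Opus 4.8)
The plan is to follow the standard ODE method for stochastic approximation in the set-valued form developed in \cite{benaim2005stochastic}: pass from the discrete recursion to a continuous-time interpolated curve, show that this curve shadows the flow of $\frac{dx}{dt}\in F(x)$ asymptotically (i.e.\ is an \emph{asymptotic pseudotrajectory}), and then invoke the general fact that the limit set of a bounded asymptotic pseudotrajectory is connected and internally chain transitive. First I would introduce the interpolation: put $\tau_0 := 0$ and $\tau_t := \sum_{i=1}^{t}\alpha_i$, so that $\tau_t\to\infty$ since $\sum_t\alpha_t=\infty$, and define $\mathbf{X}:[0,\infty)\to\mathbb{R}^m$ by $\mathbf{X}(\tau_t):=x(t)$, extended affinely on each interval $[\tau_t,\tau_{t+1}]$. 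Because $\alpha_t\to 0$, consecutive interpolation nodes become arbitrarily close, so the set of limit points of $\{x(t)\}_{t\ge 1}$ coincides with the limit set $L(\mathbf{X}):=\bigcap_{s\ge 0}\overline{\{\mathbf{X}(u):u\ge s\}}$ of the curve; it therefore suffices to characterize $L(\mathbf{X})$.

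The heart of the argument is to show that $\mathbf{X}$ is an asymptotic pseudotrajectory of the differential inclusion, i.e.\ that for every $T>0$,
\[
\lim_{s\to\infty}\ \inf_{y}\ \sup_{0\le u\le T}\|\mathbf{X}(s+u)-y(u)\| = 0,
\]
where the infimum is taken over solutions $y$ of $\frac{dy}{dt}\in F(y)$ with $y(0)=\mathbf{X}(s)$. To do this I would fix an arbitrary sequence $s_k\to\infty$, consider the time-shifted curves $\mathbf{X}^k(\cdot):=\mathbf{X}(s_k+\cdot)$ restricted to $[0,T]$, and extract a convergent subsequence. Hypothesis (b) gives a uniform bound on $\|\mathbf{X}^k\|$; combined with the linear-growth bound A.\ref{a_F}(iii) and the vanishing-perturbation hypothesis (a), the curves $\mathbf{X}^k$ are uniformly Lipschitz on $[0,T]$ for all large $k$, so Arzel\`a--Ascoli produces a subsequence converging uniformly to some absolutely continuous $y$ with $y(0)=\lim_k\mathbf{X}^k(0)$. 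Writing the recursion as $x(t+1)-x(t)=\alpha_{t+1}(v_{t+1}+M(t+1))$ with $v_{t+1}\in F(x(t))$, one expresses $\mathbf{X}^k(u)-\mathbf{X}^k(0)$ as an integral of piecewise-constant selections of $F$ evaluated along the curve, plus a remainder controlled by (a) that tends to $0$ uniformly on $[0,T]$; passing to the limit and using that $F$ is closed and convex-valued (A.\ref{a_F}(i)--(ii)), via the convergence theorem for differential inclusions, shows $\frac{dy}{du}(u)\in F(y(u))$ for almost every $u$, so that $y$ is a genuine solution. Since $s_k$ was arbitrary, this establishes the asymptotic pseudotrajectory property.

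Finally, boundedness of $\{x(t)\}$ makes $\mathbf{X}$ a bounded curve, so $L(\mathbf{X})$ is nonempty, compact and connected, and the characterization of limit sets of bounded asymptotic pseudotrajectories in the differential-inclusion setting \cite{benaim2005stochastic} then yields that $L(\mathbf{X})$ is internally chain transitive for $\frac{dx}{dt}\in F(x)$; combined with the reduction from the first paragraph, this gives the theorem. I expect the main obstacle to be the middle step---more precisely, verifying that the uniform limit $y$ of the shifted interpolations solves the \emph{given} inclusion and not merely some inflation of it. This is exactly where the closedness and convexity of $F$ are indispensable and where the perturbation hypothesis (a) must be used to absorb the noise contribution uniformly over the window $[0,T]$, while A.\ref{a_F}(iii) is what propagates the a priori bound (b) into the equi-Lipschitz estimate needed for Arzel\`a--Ascoli.
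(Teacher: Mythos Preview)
Your sketch is correct and follows precisely the route taken in \cite{benaim2005stochastic}: interpolate, show the interpolated curve is a perturbed solution / asymptotic pseudotrajectory of the inclusion using A.\ref{a_F}, hypothesis (a), and Arzel\`a--Ascoli, and then invoke the general limit-set theorem for bounded asymptotic pseudotrajectories. Note, however, that the present paper does not prove this theorem at all---it is quoted verbatim from \cite{benaim2005stochastic} and used as a black box---so there is nothing to compare against beyond observing that your outline is faithful to the original source.
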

In an abuse of terminology,\footnote{A perturbed solution of \eqref{CTS_diff_incl} typically refers to a continuous time process that is associated with \eqref{CTS_diff_incl} by means of an integrable perturbation. Under appropriate conditions, processes of the form \eqref{def_perturbed_soln} may be transformed via an interpolation procedure into a continuous-time process satisfying the typical definition of a perturbed solution. See \cite{benaim2005stochastic}, Section I for more details.} we sometimes refer to a discrete-time process $\{x(t)\}_{t\geq 1}$ verifying the recursion \eqref{def_perturbed_soln} as a perturbed solution of \eqref{CTS_diff_incl}.

The differential inclusion \eqref{CTS_diff_incl} induces a set-valued dynamical system $\{\Phi_t\}_{t\in \mathbb{R}}$ defined by
$$\Phi_t(x_0) := \{ x(t):  x \mbox{ is a solution to \eqref{CTS_diff_incl} with } x(0) = x_0\}.
$$
Let $\Lambda$ be any subset of $\mathbb{R}^m$. A continuous function $V:\mathbb{R}^m\rightarrow\mathbb{R}$ is called a Lyapunov function for $\Lambda$ if $V(y) < V(x_0)$ for all $x_0\in \mathbb{R}^m\backslash \Lambda$, $y\in \Phi_t(x_0),~t>0$, and $V(y) \leq V(x_0)$ for all $x_0\in \Lambda,~ y\in \Phi_t(x_0)$ and $t\geq 0$. The following proposition (\cite{benaim2005stochastic}, Proposition 3.27) allows one to relate the chain transitive sets of a differential inclusion to Lyapunov attracting sets.
\begin{prop}
\label{lyapunov_prop}
Suppose that $V$ is a Lyapunov function for $\Lambda$. Assume that $V(\Lambda)$, the image of $\Lambda$ under $V$, has empty interior. Then every internally chain transitive set $L$ is contained in $\Lambda$ and $V\vert L$, the restriction of $V$ to the set $L$, is constant.
\end{prop}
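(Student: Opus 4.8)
The plan is to prove this by the Conley-theoretic route of \cite{benaim2005stochastic}: at each ``regular'' level of $V$ I would build an attractor for the set-valued flow $\Phi_t$, invoke the fact that an internally chain transitive set cannot be split by an attractor, and then exploit the empty-interior hypothesis together with the density of regular levels to force both conclusions.

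First I would record two structural facts. By A.\ref{a_F} the solution map of \eqref{CTS_diff_incl} is upper semicontinuous with nonempty compact values, so $\bar V_T(\xi) := \max_{\eta \in \Phi_T(\xi)} V(\eta)$ is upper semicontinuous and $g_T := V - \bar V_T$ is lower semicontinuous; moreover the two defining inequalities of a Lyapunov function, together with continuity of $V$ at $t=0$, show that $V$ is nonincreasing along every solution and that $g_T(\xi) > 0$ exactly when $\xi \notin \Lambda$. Call $c \in \mathbb{R}$ a \emph{regular level} if $c \notin V(\Lambda)$, equivalently $\Lambda \cap V^{-1}(c) = \emptyset$. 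For a regular level $c$ every point of the boundary $\{V = c\}$ lies outside $\Lambda$, so the strict-decrease inequality forces every solution issuing from such a point immediately into $\{V < c\}$; hence (after localizing to a suitable compact region containing a neighborhood of $L$, available from the growth bound A.\ref{a_F}(iii)) the sublevel set $\{V \le c\}$ is a trapping region and determines an attractor $A_c \subseteq \{V \le c\}$ whose basin $B(A_c)$ contains $\{V \le c\}$ as well as every $\xi$ with $\bar V_t(\xi) < c$ for some $t > 0$.

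The heart of the argument is the Conley-type dichotomy, which I would take in the set-valued form of \cite{benaim2005stochastic}: for the attractor $A_c$ and any internally chain transitive set $L$, either $L \subseteq A_c$ or $L \cap B(A_c) = \emptyset$. Since $\{V \le c\} \subseteq B(A_c)$, in terms of $V$ this reads: for every regular level $c$, either $\max_L V \le c$ or $\min_L V > c$; that is, $V(L)$ never straddles a regular level. Writing $v^- := \min_L V$ and $v^+ := \max_L V$ (attained, as $L$ is compact and $V$ continuous), the contrapositive gives $[v^-, v^+) \subseteq V(\Lambda)$. As $V(\Lambda)$ has empty interior it contains no nondegenerate interval, so $v^- = v^+$ and $V|L$ is constant, say $\equiv v$. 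For the containment, suppose some $\xi \in L \setminus \Lambda$; then $g_{t_0}(\xi) > 0$, i.e.\ $\bar V_{t_0}(\xi) < v$, for each $t_0 > 0$. Choosing a regular level $c \in (\bar V_{t_0}(\xi), v)$---possible because the regular levels are dense, the complement of an empty-interior set being dense---we get $\Phi_{t_0}(\xi) \subseteq \{V < c\}$, so $\xi \in B(A_c)$. The dichotomy then forces $L \subseteq A_c \subseteq \{V \le c\}$, contradicting $V \equiv v > c$ on $L$. Hence $L \subseteq \Lambda$.

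I expect the dichotomy to be the main obstacle: it is precisely where internal chain transitivity is used in an essential way, and where the accumulation of the $\epsilon$-perturbations in a chain must be controlled. A naive telescoping estimate along a chain $x_1, \dots, x_n$ yields only $V(\eta) \le V(\xi) + (n+1)\,\omega_V(\epsilon)$, with $\omega_V$ the modulus of continuity of $V$ on $L$, and the number of jumps $n$ is not controlled as $\epsilon \to 0$; this is why a direct ``chains decrease $V$'' argument fails and why the attractor formulation---showing that a chain-recurrent point lying in a basin is necessarily drawn into the attractor, so that no sustained uphill motion off $\Lambda$ is possible---is the right device. The remaining ingredients (upper semicontinuity of $\Phi_t$, the trapping-region construction, and density of regular levels) are routine given A.\ref{a_F} and the empty-interior hypothesis.
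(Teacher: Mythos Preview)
The paper does not prove this proposition: it is quoted verbatim as \cite{benaim2005stochastic}, Proposition~3.27, and used as a black box in the proof of Lemma~\ref{lemma_CTS_in_SNE}. There is therefore no ``paper's own proof'' to compare your attempt against.

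That said, your sketch is a faithful outline of the Bena\"im--Hofbauer--Sorin argument. The three ingredients---(1) for each regular level $c\notin V(\Lambda)$ the sublevel set $\{V\le c\}$ is a trapping region yielding an attractor $A_c$; (2) the attractor/chain-transitive dichotomy (an internally chain transitive $L$ satisfies $L\subseteq A_c$ or $L\cap B(A_c)=\emptyset$); (3) density of regular levels from the empty-interior hypothesis---are exactly the ones used in the cited reference, and your deduction $[v^-,v^+)\subseteq V(\Lambda)$ followed by the contradiction argument for $L\subseteq\Lambda$ is correct. Your candid remark that the dichotomy is where the real work lies, and that a naive telescoping of $V$ along an $(\epsilon,T)$-chain fails because the number of jumps is uncontrolled, is also accurate; in \cite{benaim2005stochastic} this step is handled via the characterization of internally chain transitive sets as those that meet no proper attractor's basin without being contained in the attractor (their Proposition~3.20 and Corollary~3.24), which you would need to invoke or reprove.
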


\section{continuous-time ECFP}
\label{sec_CT_ECFP}
In this section we consider a continuous-time version of ECFP. Let $\Gamma$ satisfy A.\ref{a_iden_interests} and let $\mathcal{C}$ be a permutation-invariant partition of $N$. In analogy to\footnote{Note that \eqref{dt_ecfp} may be written as $q(t+1)-q(t) \in \frac{1}{t+1}\left(BR^{\epsilon_t}(\bar q(t)) - q(t) \right)$.} \eqref{dt_ecfp}, for $t> 0$ let
\begin{equation}
\label{ct_ecfp}
\dot q^c(t) \in BR(\bar q^c(t)) - q^c(t),
\end{equation}
where we use the superscript $q^c(t)$ to indicate a continuous-time analog of the empirical distribution, and where, for $p\in \Delta^n$, we let $BR(p) := BR^\epsilon(p)$ with $\epsilon=0$, and $\bar q^c(t)$ is the centroid distribution associated with $q^c(t)$ (see \eqref{def_centroid}). We refer to the process $\{q^c(t),\bar q^c(t)\}_{t\geq 0}$ as a continuous-time ECFP (CT-ECFP) process relative to $(\Gamma,\mathcal{C})$.

As our end goal involves studying the limiting behavior of $\{\bar q^c(t)\}_{t\geq 1}$, note that for $k\in I$, and $\bar{q}^{c,k}(t)$ defined similar to \eqref{def_partial_centroid}, there holds
\begin{align}
\dot{\bar{q}}^{c,k}(t) & = \frac{d}{dt}|C_k|^{-1}\sum\limits_{j\in C_k} q^c_j(t)\\
& = |C_k|^{-1}\sum\limits_{j\in C_k} \frac{d}{dt} q^c_j(t)\\
& =|C_k|^{-1}\sum\limits_{j\in C_k} \dot q^c_j(t),
\end{align}

Let $p(t) = \dot q^c(t) + q^c(t)$, so that $\bar p(t) = (\bar p_1(t),\ldots,\bar p_n(t))$ with $\bar p_i(t) = \bar p^{\phi(i)}(t)$ for $i\in N$ (see \eqref{def_centroid}). By the above, and the linearity of differentiation, $\bar p(t) = \dot{\bar{q^c}}(t) + \bar q^c(t)$. Thus, by Lemma \ref{lemma_permutation_BR}, \eqref{ct_ecfp} implies that $\bar p(t) \in BR(\bar q^c(t)),$ or equivalently,
\begin{equation}
\dot{\bar{q^c}}(t) \in BR(\bar q^c(t)) - \bar q^c(t).
\label{centroid_diff_incl}
\end{equation}

\subsection{Convergence in Continuous Time}
\label{sec_ct_results}
This section studies the convergence of continuous-time ECFP to the sets of SNE and MCE.

For any solution $q^c(t)$  of \eqref{ct_ecfp} and associated centroid process $\bar q^c(t)$, let $w(t) := U(\bar q^c(t))$ and let $v(t) :=
\frac{1}{n}\sum_{i=1}^n U(q^c_i(t),\bar q^c_{-i}(t))$. There holds,
\begin{align}
\dot w(t) & = \sum\limits_{i=1}^n \frac{\partial}{\partial \bar q^c_i}U(\bar q^c(t))\dot{\bar{q^c}}_i(t)\\
& \geq \sum\limits_{i=1}^n \left[U(\dot{\bar{q^c}}_i(t) + \bar q^c_i(t),\bar q^c_{-i}(t)) - U(\bar q^c(t))\right]\\
& = \sum\limits_{i=1}^n \left[\max_{\alpha_i \in \Delta_i} U(\alpha_i,\bar q^c_{-i}(t)) - U(\bar q^c(t)) \right]\geq 0,
\label{lyapunov_eq}
\end{align}
where the second line follows from the concavity of $U$ in $p_i$, and the third follows from \eqref{centroid_diff_incl}.

By  Lemma \ref{lemma_rearangement} there holds
\begin{equation}
\frac{1}{n}\sum_{i=1}^n U(q^c_i(t),\bar q^c_{-i}(t)) = U(\bar q^c(t)).
\label{rearrangement_identity}
\end{equation}
Hence $v(t) = w(t)$, there holds $\cdot v(t) \geq 0$. Moreover, the following expansion is useful in order to study $v$ as a Lyapunov function for the set of MCE:
{\small
\begin{align}
\dot v(t) = \dot w(t) & \geq \sum\limits_{i=1}^n \left[\max_{\alpha_i \in \Delta_i} U(\alpha_i,\bar q^c_{-i}(t)) - U(\bar q^c(t))\right]\\
& = \sum\limits_{i=1}^n \max_{\alpha_i \in \Delta_i}U(\alpha_i,\bar q^c_{-i}(t)) - nU(\bar q^c(t))\\
& = \sum\limits_{i=1}^n \max_{\alpha_i \in \Delta_i}U(\alpha_i,\bar q^c_{-i}(t)) - \sum_{i=1}^n U(q^c_i(t),\bar q^c_{-i}(t))\\
& = \sum\limits_{i=1}^n \left[ \max_{\alpha_i \in \Delta_i}U(\alpha_i,\bar q^c_{-i}(t)) - U(q^c_i(t),\bar q^c_{-i}(t))\right] \geq 0,
\label{lyapunov_eq2}
\end{align}}
where the inequality follows from \eqref{lyapunov_eq}, and the third line follows again from \eqref{rearrangement_identity}.


By \eqref{lyapunov_eq}, $w(t)$ is weakly increasing, and is constant in a time interval $T$ if and only if $\max_{\alpha_i \in \Delta_i} U(\alpha_i,\bar q^c_{-i}(t)) - U(\bar q^c(t)) = 0, ~\forall i$; i.e., if and only if $\bar q^c(t) \in SNE$ for all $t\in T$.

By \eqref{lyapunov_eq2}, $v(t)$ is weakly increasing, and $\dot v(t) = 0$ in some interval $T$  implies $\max_{\alpha_i \in \Delta_i} U(\alpha_i,\bar q^c_{-i}(t)) - U(q^c_i(t),\bar q^c_{-i}(t)) =0, ~\forall i\in N$, $t\in T$; i.e., $q^c(t) \in MCE$ for all $t\in T$. Moreover, by Lemma \ref{lemma_permutation_BR}, $q^c(t) \in MCE, ~\forall t\in T \implies \bar q^c(t) \in SNE \forall t\in T $, which by the above comments implies $\dot w(t) = 0$ in $T$, or equivalently $\dot v(t) = 0$ in $T$. Thus, $v(t)$ is constant in a time interval $T$ if and only if $q^c(t)\in MCE$ for all $t\in T$.

\begin{prop}
Assume A.\ref{a_iden_interests} holds. Then,\\
(i) The limit set of every solution of \eqref{centroid_diff_incl} is a connected subset of SNE along which $U$ is constant;\\
(ii) For $p\in \Delta^n$, let
$V(p) := \frac{1}{n}\sum_{i=1}^n U(p_i,\bar q^c_{-i}).$
The limit set of every solution of \eqref{ct_ecfp} is a connected subset of MCE along which $V$ is constant.\\
\end{prop}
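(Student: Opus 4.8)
The plan is a LaSalle-type argument resting on the monotonicity facts already established: by \eqref{lyapunov_eq}, $w(t):=U(\bar q^c(t))$ is nondecreasing along every solution of \eqref{centroid_diff_incl}, and by \eqref{lyapunov_eq2} together with \eqref{rearrangement_identity}, $v(t):=V(q^c(t))$ (which equals $U(\bar q^c(t))$) is nondecreasing along every solution of \eqref{ct_ecfp}; moreover $w$ is constant on a time interval iff $\bar q^c$ lies in $SNE$ throughout that interval, and $v$ is constant on a time interval iff $q^c$ lies in $MCE$ throughout it. The proof turns these into statements about $\omega$-limit sets.

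First I would verify that the set-valued vector fields in \eqref{ct_ecfp} and \eqref{centroid_diff_incl} satisfy the standing hypotheses A.\ref{a_F}: for each fixed $p_{-i}$ the set $BR_i(p_{-i})$ is a (nonempty, compact, convex) face of $\Delta_i$ because $U$ is affine in $p_i$, and $p_{-i}\mapsto BR_i(p_{-i})$ is upper semicontinuous by Berge's maximum theorem; these properties are preserved under composition with the linear centroid map \eqref{def_centroid} and under subtraction of the identity, and linear growth is automatic since all values lie in a fixed bounded set. I would also note that a solution of \eqref{ct_ecfp} started in $\Delta^n$ stays in $\Delta^n$ — writing $q^c(t)=e^{-t}q^c(0)+\int_0^t e^{s-t}b(s)\,ds$ with $b(s)\in BR(\bar q^c(s))\subseteq\Delta^n$ exhibits $q^c(t)$ as a convex combination of points of $\Delta^n$ — and in any case remains bounded, and likewise for $\bar q^c$ via \eqref{centroid_diff_incl}. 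Hence, by the standard theory of differential inclusions under A.\ref{a_F} (\cite{benaim2005stochastic}), the $\omega$-limit set $L$ of any such bounded solution is nonempty, compact, connected, and invariant, in the sense that through every point of $L$ there passes a solution of the relevant inclusion that stays in $L$ for all $t\in\mathbb{R}$.

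For part (i): since $w$ is nondecreasing and bounded ($U$ is continuous on the compact set $\Delta^n$), $w(t)\uparrow w^\ast$, so for $\xi\in L$, choosing $t_k\to\infty$ with $\bar q^c(t_k)\to\xi$ and using continuity of $U$ gives $U(\xi)=w^\ast$; thus $U\equiv w^\ast$ on $L$. Now fix $\xi\in L$ and let $y(\cdot)$ be a solution of \eqref{centroid_diff_incl} with $y(0)=\xi$ and $y(\mathbb{R})\subseteq L$ (invariance of $L$). Then $t\mapsto U(y(t))\equiv w^\ast$ is constant, so by the characterization recalled above $y(t)\in SNE$ for all $t$; in particular $\xi=y(0)\in SNE$. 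Hence $L$ is a connected subset of $SNE$ on which $U$ is constant, which is (i). Part (ii) is the identical argument run with \eqref{ct_ecfp}, $v=V(q^c(\cdot))$, and the characterization that $v$ is constant on an interval iff $q^c\in MCE$ there: $V$ is constant (equal to $\lim_t v(t)$) on the $\omega$-limit set $L$ of any solution of \eqref{ct_ecfp}, and an invariant trajectory through a point of $L$ must lie in $MCE$ for all time, so $L\subseteq MCE$ and $V|_L$ is constant.

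The only genuinely delicate ingredient is the invariance of $\omega$-limit sets for the differential \emph{inclusions} \eqref{ct_ecfp}--\eqref{centroid_diff_incl}: this is precisely what licenses feeding a complete trajectory back into the interval-characterizations of $SNE$ and $MCE$, and it rests on the verification of A.\ref{a_F} for the maps $p\mapsto BR(\bar p)-p$ and $p\mapsto BR(p)-p$ sketched above. One can alternatively avoid invoking invariance by a quantitative variant, using that trajectories are Lipschitz and that $\int_0^\infty\dot w(t)\,dt=w^\ast-w(0)<\infty$ to rule out any point of $L$ at which the relevant nonnegative gap is strictly positive.
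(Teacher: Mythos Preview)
Your proposal is correct and takes essentially the same approach as the paper. The paper's own proof is the one-line remark that the proposition ``follows from the above comments'' (i.e., the Lyapunov monotonicity facts \eqref{lyapunov_eq}--\eqref{lyapunov_eq2} and the interval-characterizations of $SNE$/$MCE$ derived just before the statement); you have simply spelled out the standard LaSalle invariance argument---boundedness, invariance of $\omega$-limit sets under A.\ref{a_F}, constancy of the Lyapunov function on the limit set, and feeding an invariant trajectory back into the characterizations---that the paper leaves implicit.
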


\noindent The proof of this proposition follows from the above comments.

\section{Limit Sets of Discrete-Time ECFP}
\label{sec_DT_convergence}
In this section we study the limit sets of DT-ECFP by relating them to the internally chain transitive sets of CT-ECFP.
%

The following lemma relates the limit sets of DT-ECFP to the internally chain transient sets of the CT-ECFP differential inclusions \eqref{ct_ecfp} and \eqref{centroid_diff_incl}.
\begin{lemma}
Assume A.\ref{a_iden_interests}--A.\ref{a_epsilon} hold. Let $\{q(t),\bar q(t)\}_{t\geq 1}$ be a discrete-time ECFP process. Then, \\
(i) The set of limit points of $\{\bar q(t)\}_{t\geq 1}$ is a connected internally chain transient set of \eqref{centroid_diff_incl},\\
(ii) The set of limit points of $\{ q(t)\}_{t\geq 1}$ is a connected internally chain transient set of \eqref{ct_ecfp}.
\label{lemma_limit_set_of_ECFP}
\end{lemma}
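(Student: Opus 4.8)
The plan is to recognize the discrete-time recursions \eqref{dt_ecfp} and \eqref{dt_centroid_process} as perturbed solutions of the continuous-time inclusions \eqref{ct_ecfp} and \eqref{centroid_diff_incl} in the sense of \eqref{def_perturbed_soln}, and then to invoke Theorem \ref{limit_set_thrm}. For part (ii) take $F(\xi) := BR(\bar\xi) - \xi$, the right-hand side of \eqref{ct_ecfp}, where $\bar\xi$ is the centroid distribution associated with $\xi$; for part (i) take the analogous map $\eta \mapsto BR(\eta) - \eta$ from \eqref{centroid_diff_incl}. In both cases set $\alpha_{t+1} := \gamma_t$, which satisfies $\sum_t \alpha_t = \infty$ and $\alpha_t \to 0$ by A.\ref{a_step_size}. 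It then remains to (1) check that $F$ satisfies A.\ref{a_F}; (2) exhibit a perturbation sequence $\{M(t)\}$ putting the recursion in the form \eqref{def_perturbed_soln}; and (3) verify conditions (a) and (b) of Theorem \ref{limit_set_thrm}. The conclusion of that theorem is then exactly the statement of Lemma \ref{lemma_limit_set_of_ECFP}.

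For step (1), the needed properties are standard facts about the best-response correspondence: $BR(\cdot)$ has closed graph (Berge's maximum theorem, as $U$ is continuous and each feasible set $\Delta_i$ is fixed and compact), it is convex- and compact-valued (each $BR_i$ is a face of $\Delta_i$, and $BR$ is the product of these faces), and, after extending $F$ off $\Delta^n$ by precomposing with a retraction onto $\Delta^n$, it is globally bounded in its first term, so the linear-growth bound A.\ref{a_F}(iii) holds; this extension is harmless since $\Delta^n$ is forward invariant for \eqref{ct_ecfp} and \eqref{centroid_diff_incl}. For step (2), write \eqref{dt_ecfp} as $q(t+1) - q(t) = \gamma_t\bigl(b(t) - q(t)\bigr)$ with $b(t)\in BR^{\epsilon_t}(\bar q(t))$, choose any $c(t) \in BR(\bar q(t))$, and set $M(t+1) := b(t) - c(t)$; then $q(t+1) - q(t) - \gamma_t M(t+1) = \gamma_t\bigl(c(t) - q(t)\bigr) \in \gamma_t\bigl(BR(\bar q(t)) - q(t)\bigr) = \gamma_t F(q(t))$, which is \eqref{def_perturbed_soln}. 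The same $M(t)$ puts \eqref{dt_centroid_process} in this form, since $\bar q(t)$ is the centroid of $q(t)$. Condition (b) is then immediate: $q(t)$ and $\bar q(t)$ remain in $\Delta^n$ (resp. its image under the centroid map) for all $t$.

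The crux, and the main obstacle, is condition (a): showing that the perturbation coming from the $\epsilon_t$-slack is asymptotically negligible in the averaged sense required by Theorem \ref{limit_set_thrm}. One cannot bound $\|M(t+1)\|$ pointwise, because an $\epsilon_t$-best response to $\bar q(t)$ need not be close to $BR(\bar q(t))$ when that set is low-dimensional. The resolution, following \cite{leslie2006generalised,benaim2005stochastic}, is a uniform continuity property of the best-response correspondence: for every $\delta>0$ there is $\epsilon>0$ such that, for all $p\in\Delta^n$, every point of $BR^\epsilon(p)$ is an exact best response to some point within $\delta$ of $p$. Since $\epsilon_t \to 0$ by A.\ref{a_epsilon}, this exhibits $\{q(t)\}$ (and $\{\bar q(t)\}$) as a perturbed solution of the $\delta_t$-inflation of $F$ with $\delta_t \to 0$, which is the hypothesis actually invoked by the stochastic-approximation argument underlying Theorem \ref{limit_set_thrm} and which yields condition (a) after interpolation. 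Establishing this continuity property — essentially the only nontrivial analytic ingredient — uses the multilinearity of $U$ and compactness of $\Delta^n$, exactly as in \cite{leslie2006generalised}.

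Finally, I would note that parts (i) and (ii) are genuinely parallel and both reduce to the single perturbation analysis above; one could alternatively deduce (i) from (ii) by pushing the limit set of $\{q(t)\}$ forward under the (linear, continuous) centroid map and using Lemma \ref{lemma_permutation_BR} to check that this map sends solutions of \eqref{ct_ecfp} to solutions of \eqref{centroid_diff_incl}, but applying Theorem \ref{limit_set_thrm} directly to each recursion is cleaner. The connectedness assertion in the lemma is part of the conclusion of Theorem \ref{limit_set_thrm} and requires no separate argument.
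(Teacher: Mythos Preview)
Your proposal follows the same high-level strategy as the paper: cast \eqref{dt_ecfp} and \eqref{dt_centroid_process} as perturbed solutions of \eqref{ct_ecfp} and \eqref{centroid_diff_incl} and invoke Theorem~\ref{limit_set_thrm}. Your choices of $F$, $\alpha_t$, and $M(t+1)=b(t)-c(t)$ match the paper's (the paper writes $M$ as the set difference $BR(\bar q)-BR^{\epsilon_t}(\bar q)$, but this must be read as a selection, which is exactly what you do). Condition (b) and the verification of A.\ref{a_F} are handled identically.

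The substantive difference is in condition (a). The paper disposes of it in one line, asserting that upper semicontinuity of $BR$ gives $BR^\epsilon(p)\to BR(p)$ \emph{uniformly} in $p$ as $\epsilon\to 0$, hence $\|M(t)\|\to 0$. You correctly observe that this is false as stated: near a strategy profile where the exact best-response set enlarges (a tie), an $\epsilon$-best response can sit at Hausdorff distance bounded away from $BR(p)$ no matter how small $\epsilon$ is. Your fix---showing instead that every $\epsilon$-best response to $p$ lies in $BR(p')$ for some $p'$ with $\|p'-p\|\leq\delta(\epsilon)\to 0$, so that the interpolated process is a perturbed solution of the $\delta_t$-inflation $F^{\delta_t}$---is the argument actually carried out in \cite{leslie2006generalised,benaim2005stochastic}, and it is what is needed here. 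One caveat: this route does not literally verify condition (a) of Theorem~\ref{limit_set_thrm} as the paper states it, but rather invokes the slightly more general perturbed-solution framework of \cite{benaim2005stochastic} that allows $\delta_t$-inflations; you should make that dependence explicit. A minor slip: for part (i) the perturbation is not ``the same $M(t)$'' but its centroid $\bar M(t)$, which lies in $BR^{\epsilon_t}(\bar q)-BR(\bar q)$ by Lemma~\ref{lemma_permutation_BR}; the rest of the argument is unchanged.
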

\begin{proof}
Proof of (ii):
Observe that adding and subtracting the set $BR(\bar q(t))$ and rearranging terms in \eqref{dt_ecfp} gives,
\begin{align}
q(t+1) - q(t) + \gamma_t\left\{BR(\bar q(t)) - BR^{\epsilon_t}(\bar q(t))\right\}\\
\in \gamma_t \left\{BR(\bar q(t)) - q(t)\right\}.
\label{lemma_limit_set_of_ECFP_eq1}
\end{align}
Thus, the process $\{q(t)\}_{t\geq 1}$ fits the template of Theorem \ref{limit_set_thrm} with $x(t) := q(t)$, $F(x) := BR(\bar x) - x$, and $M(t) := BR(\bar x) - BR^{\epsilon_t}(\bar x)$. It is straightforward to verify that $F$ satisfies A.\ref{a_F}.

It suffices to show that the process \eqref{lemma_limit_set_of_ECFP_eq1} satisfies conditions (a) and (b) of Theorem \ref{limit_set_thrm}. Condition (b) is trivially satisfied since $q(t) \in \Delta^n$ for all $t$.

If $k$ is such that $\sum_{i=n}^{k-1} \gamma_i \leq T$, then
\begin{align}
\sup\limits_{k} \|\sum\limits_{i=n}^{k-1}\gamma_i \{BR^{\epsilon_i}(\bar q(t)) - BR(\bar q(t))\}\|\\
\leq T\sup\limits_{k} \|BR^{\epsilon_i}(\bar q(t)) - BR(\bar q(t))\|.
\end{align}
Since $BR$ is upper semicontinuous, $BR^{\epsilon}(p) \rightarrow BR(p)$ uniformly as $\epsilon \rightarrow 0$. Thus condition (a) holds, and (ii) is proved.


Proof of (i): Adding and subtracting $BR(\bar q(t))$ and rearranging terms, the difference inclusion \eqref{dt_centroid_process} may be written as
\begin{align}
\bar q(t+1) - \bar q(t) + \gamma_t \left\{BR(\bar q(t))-BR^{\epsilon_t}(\bar q(t))\right\}\\
\in \gamma_t \left\{BR(\bar q(t)) - \bar q(t)\right\}.
\label{lemma_limit_set_of_ECFP_eq2}
\end{align}
Thus, the process $\{\bar q(t)\}$ fits the template of Theorem \ref{limit_set_thrm} with $x(t) := \bar q(t)$, $M(t) := BR(\bar q(t))-BR^{\epsilon_t}(\bar q(t))$, and $F(x) := BR(x) - x$.
It was shown in \cite{benaim2005stochastic} that $F$ satisfies A.\ref{a_F}.
It is sufficient to show that the process \eqref{lemma_limit_set_of_ECFP_eq2} satisfies conditions (a) and (b) of Theorem \ref{limit_set_thrm}. Condition (b) is trivially satisfied since $\bar q(t) \in \Delta^n$ for all $t$. Since $M(t)$ is defined the same manner as in case (i), the proof that condition (a) is satisfied follows the same reasoning as in the proof of assertion (ii).
\end{proof}

The following lemma relates internally chain transitive sets of \eqref{ct_ecfp} to the set of MCE, and the internally chain transitive sets of \eqref{centroid_diff_incl} to the set of SNE; combined with Lemma \ref{lemma_limit_set_of_ECFP} this will prove Theorem \ref{main_result}.
\begin{lemma}
Let $\Gamma$ be an identical interests game. Let $\mathcal{C}$ be a permutation-invariant partition on $\Gamma$. Then,\\
(i) Every internally chain transitive set of \eqref{centroid_diff_incl} is contained in the set of SNE.\\
(ii) Every internally chain transitive set of \eqref{ct_ecfp} is contained in the set of MCE.
\label{lemma_CTS_in_SNE}
\end{lemma}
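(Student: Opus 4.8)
The plan is to derive both parts of the lemma from Proposition~\ref{lyapunov_prop}. For part~(i) I would apply the proposition to the differential inclusion~\eqref{centroid_diff_incl} with $\Lambda := SNE$ and Lyapunov function $-U$; for part~(ii), to~\eqref{ct_ecfp} with $\Lambda := MCE$ and Lyapunov function $-V$, where $V(p) := \frac{1}{n}\sum_{i=1}^{n} U(p_i,\bar p_{-i})$ with $\bar p$ the centroid distribution of $p$ (cf.~\eqref{def_centroid})---by the rearrangement identity~\eqref{rearrangement_identity} this reduces to $V(p) = U(\bar p)$. Two things then need to be checked: (a)~$-U$ (resp.\ $-V$) is a Lyapunov function for $SNE$ (resp.\ $MCE$) in the sense of Section~\ref{sec_CTS}, and (b)~$U(SNE)$ (resp.\ $V(MCE)$) has empty interior in $\mathbb{R}$. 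Given (a) and (b), Proposition~\ref{lyapunov_prop} yields the asserted containments (and, as a by-product, constancy of $U$, resp.\ $V$, on each internally chain transitive set, recovering the Proposition of Section~\ref{sec_ct_results}). One preliminary point: since $\Delta^n$ is forward invariant and globally attracting for the right-hand sides of~\eqref{ct_ecfp} and~\eqref{centroid_diff_incl}, the internally chain transitive sets at issue lie in $\Delta^n$, where the estimates of Section~\ref{sec_ct_results} are valid.

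For~(a) I would use the computations~\eqref{lyapunov_eq} and~\eqref{lyapunov_eq2} directly. Along any solution $\bar q^c(\cdot)$ of~\eqref{centroid_diff_incl}, \eqref{lyapunov_eq} gives $\frac{d}{dt}U(\bar q^c(t)) \ge g(t) \ge 0$ with $g(t) := \sum_{i} [\max_{\alpha_i \in \Delta_i} U(\alpha_i,\bar q^c_{-i}(t)) - U(\bar q^c(t))]$ continuous and vanishing exactly when $\bar q^c(t) \in SNE$. Since $SNE$ is closed, from any $\bar q^c(0) \notin SNE$ there is $\delta > 0$ with $g > 0$ on $[0,\delta]$, so integration together with monotonicity gives $U(\bar q^c(t)) > U(\bar q^c(0))$ for all $t > 0$; this, plus the (weak) monotonicity of $U$ along every solution, is exactly the requirement that $-U$ be a Lyapunov function for $SNE$. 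The case of $-V$ on~\eqref{ct_ecfp} is structurally identical: \eqref{lyapunov_eq2} gives $\frac{d}{dt}V(q^c(t)) \ge h(t) \ge 0$ with $h(t) := \sum_i [\max_{\alpha_i} U(\alpha_i,\bar q^c_{-i}(t)) - U(q^c_i(t),\bar q^c_{-i}(t))]$ vanishing exactly on $MCE$, and $MCE$ is closed.

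Step~(b) is the crux. The requirement for~(ii) reduces to that for~(i): by Lemma~\ref{lemma_permutation_BR}, $p \in MCE$ implies $\bar p \in SNE$, so $V(MCE) = \{U(\bar p) : p \in MCE\} \subseteq U(SNE)$, and it suffices to show $U(SNE)$ has empty interior. For this I would show $U$ is constant on each connected component of $NE$ (hence of $SNE \subseteq NE$): $NE$ is a bounded semialgebraic set---it is cut out by the polynomial inequalities $U(p) \ge U(e_a,p_{-i})$---so it has finitely many connected components and admits a finite semialgebraic stratification, which may be refined so that the support profile is constant on each stratum; along any tangent curve $\gamma$ in a smooth stratum, the Nash indifference relations give $\frac{d}{dt}U(\gamma(t)) = \sum_i \sum_{a \in \operatorname{supp} p_i} \dot\gamma_i(a)\, U(e_a,p_{-i}) = \sum_i U(p) \sum_a \dot\gamma_i(a) = 0$, so $U$ is locally constant on each stratum, hence (by continuity and connectedness) constant on each component, and $U(NE)$ is finite. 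Alternatively, this is the same fact about Nash values used for fictitious play in identical-interests games in~\cite{benaim2005stochastic,leslie2006generalised}, and one could simply invoke it. Combining (a) and (b), Proposition~\ref{lyapunov_prop} closes both parts; the main obstacle is the empty-interior claim~(b), the rest being bookkeeping with the computations of Section~\ref{sec_ct_results} and the definitions of $SNE$ and $MCE$.
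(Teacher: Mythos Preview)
Your proposal is correct and follows essentially the same architecture as the paper: apply Proposition~\ref{lyapunov_prop} with Lyapunov function $-U$ for $SNE$ along~\eqref{centroid_diff_incl} and with $-V$ for $MCE$ along~\eqref{ct_ecfp}, citing~\eqref{lyapunov_eq} and~\eqref{lyapunov_eq2} for the Lyapunov property and Lemmas~\ref{lemma_rearangement}--\ref{lemma_permutation_BR} for the reduction $V(MCE)\subseteq U(SNE)$. The only substantive difference is step~(b): the paper obtains the empty-interior property of $U(SNE)$ by invoking Sard's theorem (arguing $NE$ lies in the critical set of the multilinear function $U$), whereas you argue via a semialgebraic stratification and the Nash indifference relations that $U$ is constant on each connected component of $NE$, so $U(NE)$ is finite---a somewhat heavier but self-contained route that sidesteps the question of whether boundary equilibria are critical points of $U$ in the ambient sense.
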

\begin{proof}
Proof of (i):
Let $W := -U$. By Section \ref{sec_ct_results} (in particular, see \eqref{lyapunov_eq}), $W$ is a Lyapunov function for the set of SNE with $x(t) := \bar q(t)$. Note that $W$ is multilinear and hence continuously differentiable.

For a differentiable function $f:\mathbb{R}^m\rightarrow\mathbb{R}$, we say $x\in\mathbb{R}^m$ is a critical point of $f$ if for $i=1,\ldots, m$, the partial derivative at $x$ is zero, i.e, $\frac{\partial}{\partial x_j} f(x) = 0$. By Sard's Theorem (\cite{hirsch1976differential}, p. 69), if $CP$ is the critical points set of $W$, then $W(CP)$ contains no intervals. By definition, the set of $NE$ is contained in the critical points set of $U$, and hence also contained in the critical points of $W$. Furthermore, by definition, $SNE \subset NE$, and hence the set $SNE$ is contained in the critical points set of $W$. Thus, by Proposition \ref{lyapunov_prop}, every internally chain transitive set of \eqref{centroid_diff_incl} is contained in the set $SNE$.\\
Proof of (ii):
Note that, by Lemma \ref{lemma_permutation_BR}, $p\in BR(\bar p) \implies \bar p \in BR(\bar p). $
Thus, $p\in MCE$ implies that $\bar p \in SNE$. Let $V:\Delta^n \rightarrow \mathbb{R}$, with $V(p) := \frac{1}{n}\sum_{i=1}^n U(p_i,\bar p_{-i})$, and note that by Lemma \ref{lemma_rearangement}, $V(p) = U(\bar p)$. Invoking again Sard's Theorem, $U(NE)$ contains no intervals, and hence $U(SNE) \subset U(NE)$ contains no intervals. Since $U(SNE)$ contains no intervals, $V(MCE)$ also contains no intervals.

By Section \ref{sec_CT_ECFP} (in particular, see \eqref{lyapunov_eq2}) the function $V$ is a Lyapunov function for the set of MCE with $x(t) := q(t)$. It follows from Proposition \ref{lyapunov_prop} that every chain transitive set of \eqref{ct_ecfp} is contained in $MCE$.
\end{proof}

\subsection{Proof of Theorem 1}
\label{sec_proof_main_result}
Theorem \ref{main_result} follows directly from Lemmas \ref{lemma_limit_set_of_ECFP} and \ref{lemma_CTS_in_SNE}.
\section{Conclusions}
\label{sec_conclusion}
Classical Fictitious Play (FP) is robust to alterations in the empirical distribution step-size process and robust to best-response perturbations. These robustness properties allow for interesting modifications to FP which can be of great practical value. Empirical Centroid Fictitious Play (ECFP) is a generalization of FP designed for large games. The paper showed that ECFP is also robust to step-size alterations and best-response perturbations. This result enables future research to consider practical modifications to ECFP, similar to those already developed for FP.

\section*{Appendix}
\begin{lemma}
\label{lemma_rearangement}
Let $\mathcal{C}$ be a partition of $N$, and for $p\in \Delta^n$ let $\bar p$ be as defined in \eqref{def_centroid}. Then $\frac{1}{n}\sum_{i=1}^n U(p_i,\bar p_{-i}) = U(\bar p).$
\end{lemma}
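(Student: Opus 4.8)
The plan is to reduce the claimed identity to a per-class computation, using only that $U(\cdot)$ is linear in each of its arguments separately (see \eqref{mixed_U}) together with property $(iv)$ of the permutation-invariant partition $\mathcal{C}$. First I would record that the invariance in $(iv)$, stated there for pure profiles, passes to mixed strategies: for $i,j$ lying in a common class, relabelling the summation index in \eqref{mixed_U} and applying $(iv)$ shows that $U$ is unchanged when the strategies occupying slots $i$ and $j$ are transposed, i.e. $U(p_i,p_j,p_{-(i,j)}) = U([p_j]_i,[p_i]_j,p_{-(i,j)})$ whenever $\phi(i)=\phi(j)$.

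Next, fix a class $C_k$ and a reference index $i_0\in C_k$. For each $i\in C_k$ with $i\neq i_0$, the profile $(p_i,\bar p_{-i})$ carries $\bar p_{i_0}=\bar p^{k}$ in slot $i_0$, so transposing the strategies in slots $i$ and $i_0$ leaves $U$ unchanged (by the previous step) and yields the profile $r^{(i)}$ having $p_i$ in slot $i_0$ and $\bar p^{\phi(j)}$ in every other slot $j$; for $i=i_0$ the same description holds with $r^{(i_0)}:=(p_{i_0},\bar p_{-i_0})$. Hence $\sum_{i\in C_k} U(p_i,\bar p_{-i}) = \sum_{i\in C_k} U(r^{(i)})$. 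Since the profiles $r^{(i)}$ agree outside slot $i_0$, linearity of $U$ in that slot collapses the sum to $U$ evaluated at $\sum_{i\in C_k}p_i = |C_k|\,\bar p^{k}$ in slot $i_0$ and $\bar p^{\phi(j)}$ elsewhere, which — again by linearity in slot $i_0$, and since $\bar p^{\phi(i_0)}=\bar p^{k}$ so that the resulting profile is exactly $\bar p$ — equals $|C_k|\,U(\bar p)$. Summing over $k\in I$ then gives $\sum_{i=1}^n U(p_i,\bar p_{-i}) = \sum_{k\in I}|C_k|\,U(\bar p) = n\,U(\bar p)$, and dividing by $n$ is the claim.

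The argument is essentially bookkeeping, so I do not anticipate a genuine obstacle; the two points that need care are (a) the passage from the pure-strategy statement $(iv)$ to the mixed-strategy transposition invariance, and (b) keeping slot indices straight when $p_i$ is moved into the reference slot $i_0$. It is worth stressing where $(iv)$ is actually used: in general the ``special'' strategy $p_i$ sits in a different slot for each $i\in C_k$, so the terms $U(p_i,\bar p_{-i})$ cannot be merged by multilinearity alone — property $(iv)$ is precisely what brings them all to the common slot $i_0$ before the linear collapse.
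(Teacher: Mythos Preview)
Your argument is correct and is essentially the paper's own proof read in the reverse direction: you fix a class $C_k$ and a reference slot $i_0$ (the paper's $j$), use permutation invariance to move each $p_i$ into that slot, and then collapse by multilinearity to $|C_k|\,U(\bar p)$, which is exactly the chain of equalities the paper writes (starting from $|C_k|\,U(\bar p)$ and expanding). Your explicit remark that property $(iv)$ must be lifted from pure to mixed profiles via multilinearity is a point the paper uses implicitly, so your write-up is if anything slightly more careful.
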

\begin{proof}
Let $I$ be an index set for $\mathcal{C}$ and let $m$ be the cardinality of $I$. For $k\in I$, and $j\in C_k$ note that
\begin{align}
|C_k| U(\bar p) & = |C_k|U(\bar p_j,\bar p_{-j}) =|C_k| U(\left[ |C_k|^{-1}\sum_{i\in C_k} p_i\right]_j,\bar p_{-j})\\
& =  \sum_{i\in C_k} U([p_i]_j,\bar p_{-j}) = \sum_{i\in C_k} U(p_i,\bar p_{-i}),
\end{align}
where the second line follow from the definition of $\bar p_j$ (see \eqref{def_centroid}), the third by multilinearity of $U$, and the fourth by permutation invariance of elements in $C_k$. Thus,
\begin{align}
& \frac{1}{n}\sum_{i=1}^n U(p_i,\bar p_{-i}) = \frac{1}{n}\sum_{k\in I} \sum_{i\in C_k} U(p_i,\bar p_{-i})\\
& = \frac{1}{n} \sum_{k\in I} |C_k| U(\bar p) = \frac{1}{n} \sum_{i=1}^n U(\bar p) = U(\bar p).
\end{align}
\end{proof}

\begin{lemma}
\label{lemma_permutation_BR}
Let $q\in \Delta^n$, let $\bar q$ be as defined in \eqref{def_centroid}, and let $\epsilon \geq 0$.  If $p\in BR^{\epsilon}(\bar q)$, then $\bar p \in BR^\epsilon(\bar q)$.
\end{lemma}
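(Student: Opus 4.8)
The plan is to fix an arbitrary player $i$, say $i \in C_k$, and to verify directly that the averaged strategy $\bar p_i = \bar p^k = |C_k|^{-1}\sum_{j\in C_k} p_j$ still lies in $BR_i^\epsilon(\bar q_{-i})$, i.e. that $U(\bar p_i, \bar q_{-i}) \geq \max_{\alpha_i\in\Delta_i} U(\alpha_i,\bar q_{-i}) - \epsilon$. Since $i$ is arbitrary this yields $\bar p \in BR^\epsilon(\bar q)$. The argument rests on two facts that follow from the permutation-invariant partition structure together with multilinearity of $U$: first, that $\bar q_j = \bar q^k = \bar q_i$ for every $j\in C_k$ (so the centroid distribution is ``flat'' on each class); second, that swapping the strategies at two positions $i,j$ belonging to the same class leaves $U$ unchanged, which is property $(iv)$ extended from pure to mixed strategies by multilinearity.

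The key computation proceeds as follows. By linearity of $U$ in its $i$-th argument,
\begin{equation}
U(\bar p_i, \bar q_{-i}) = |C_k|^{-1}\sum_{j\in C_k} U(p_j, \bar q_{-i}).
\end{equation}
For each $j\in C_k$, the profile $(p_j,\bar q_{-i})$ places $p_j$ at position $i$, $\bar q^k$ at position $j$, and $\bar q$ elsewhere, while $(p_j,\bar q_{-j})$ places $\bar q^k$ at position $i$, $p_j$ at position $j$, and $\bar q$ elsewhere; these two profiles are related by the transposition of positions $i$ and $j$, both in $C_k$, so permutation invariance gives $U(p_j,\bar q_{-i}) = U(p_j,\bar q_{-j})$. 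Since $p\in BR^\epsilon(\bar q)$, we have $U(p_j,\bar q_{-j}) \geq \max_{\alpha_j\in\Delta_j} U(\alpha_j,\bar q_{-j}) - \epsilon$ for each $j\in C_k$, and again by permutation invariance together with $Y_j = Y_i$, $\max_{\alpha_j\in\Delta_j} U(\alpha_j,\bar q_{-j}) = \max_{\alpha_i\in\Delta_i} U(\alpha_i,\bar q_{-i})$. Substituting back,
\begin{equation}
U(\bar p_i,\bar q_{-i}) \geq |C_k|^{-1}\sum_{j\in C_k}\left(\max_{\alpha_i\in\Delta_i} U(\alpha_i,\bar q_{-i}) - \epsilon\right) = \max_{\alpha_i\in\Delta_i} U(\alpha_i,\bar q_{-i}) - \epsilon,
\end{equation}
which is the desired inequality.

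I expect the only delicate point to be the bookkeeping in the claim $U(p_j,\bar q_{-i}) = U(p_j,\bar q_{-j})$: one must be explicit that property $(iv)$, stated for pure profiles, lifts to arbitrary mixed strategies via the multilinearity of $U$, and that the equality of the $i$-th and $j$-th components of $\bar q$ is exactly what makes the transposition of these two positions reduce to the single swap covered by $(iv)$. Everything else is routine averaging, so no further obstacles are anticipated.
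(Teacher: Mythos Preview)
Your proposal is correct and follows essentially the same route as the paper's proof: expand $U(\bar p_i,\bar q_{-i})$ by multilinearity, use the permutation invariance of positions $i,j\in C_k$ (together with $\bar q_i=\bar q_j$) to rewrite each summand as $U(p_j,\bar q_{-j})$, apply the $\epsilon$-best-response hypothesis at player $j$, and use permutation invariance once more to identify the two maxima. Your write-up is in fact slightly more explicit than the paper's about why property~(iv) extends to mixed profiles and why the transposition preserves $U$, which is a plus.
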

\begin{proof}
Let $i\in N$. Recall that $\bar p := (p_1,\ldots,p_n)$ with $p_i = p^{\phi(i)}$. There holds
\begin{align}
U(\bar p_i, \bar q_{-i}) & = U(\left[|C_{\phi(i)}|^{-1}\sum\limits_{j\in C_{\phi(i)}}p_j \right]_i,\bar q_{-i})\\
& = |C_{\phi(i)}|^{-1}\sum\limits_{j\in C_{\phi(i)}}U([p_j]_i,\bar q_{-i}(t))\\
& = |C_{\phi(i)}|^{-1}\sum\limits_{j\in C_{\phi(i)}}U([p_j]_i, [\bar q^{\phi(i)}]_j,\bar q_{-(i,j)})\\
& = |C_{\phi(i)}|^{-1}\sum\limits_{j\in C_{\phi(i)}}U([p_j]_j, [\bar q^{\phi(i)}]_i,\bar q_{-(i,j)})\\
& = |C_{\phi(i)}|^{-1}\sum\limits_{j\in C_{\phi(i)}}U([p_j]_j,\bar q_{-j}(t))\\
& \geq |C_{\phi(i)}|^{-1}\sum\limits_{j\in C_{\phi(i)}}\max_{p_j' \in \Delta_j} \left( U(\alpha_j,\bar q_{-j})- \epsilon\right)\\
& = |C_{\phi(i)}|^{-1}\sum\limits_{j\in C_{\phi(i)}}\max_{p_i' \in \Delta_i} \left( U(\alpha_i,\bar q_{-i}) - \epsilon \right)\\
& = \max_{\alpha_i \in \Delta_i} U(\alpha_i,\bar q_{-i}) - \epsilon,
\label{perm_inv_arg}
\end{align}
where the first line follows by the definition of $\bar p_i$ (see \eqref{def_centroid}), the second from the multilinearity of $U$, the fourth by permutation invariance of elements of $C_{\phi(i)}$, the sixth by the fact that, by hypothesis, $p_j \in BR_j^{\epsilon}(\bar q_{-j})$, and the seventh by permutation invariance of elements of $C_{\phi(i)}$. Since this holds for all $i \in N$, it follows that $\bar p \in BR^\epsilon(\bar q)$.
\end{proof}

\begin{lemma}
\label{DT_centroid_lemma}
Assume A.\ref{a_iden_interests} holds and suppose the action sequence $\{a(t)\}_{t\geq 1}$ is chosen according to \eqref{dt_action_process}. Then centroid process $\{\bar q(t)\}_{t\geq 1}$ follows the differential inclusion \eqref{dt_centroid_process}.
\end{lemma}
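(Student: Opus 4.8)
The plan is to push the recursion \eqref{def_qt} through the class-averaging operator and then appeal to Lemma \ref{lemma_permutation_BR}. First I would rewrite \eqref{def_qt} in the affine-combination form $q_i(t+1) = (1-\gamma_t)q_i(t) + \gamma_t a_i(t+1)$, valid for every $i\in N$ and $t\geq 1$. Fixing $k\in I$ and averaging this identity over $i\in C_k$, the linearity of the map $p\mapsto |C_k|^{-1}\sum_{i\in C_k}p_i$ gives
\begin{equation}
\bar q^{\,k}(t+1) = (1-\gamma_t)\,\bar q^{\,k}(t) + \gamma_t\,\bar a^{\,k}(t+1),
\end{equation}
where $\bar a^{\,k}(t+1) := |C_k|^{-1}\sum_{i\in C_k} a_i(t+1)$ is the $k$-th centroid of the joint action $a(t+1)$ (defined exactly as in \eqref{def_partial_centroid}). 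Stacking these identities over all coordinates $i\in N$ according to the definition \eqref{def_centroid} of the centroid distribution, I obtain $\bar q(t+1) = (1-\gamma_t)\,\bar q(t) + \gamma_t\,\bar a(t+1)$, where $\bar a(t+1)$ denotes the centroid distribution associated with $a(t+1)$.

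It then remains to locate $\bar a(t+1)$ inside $BR^{\epsilon_t}(\bar q(t))$. By hypothesis the action sequence obeys \eqref{dt_action_process}, i.e. $a(t+1)\in BR^{\epsilon_t}(\bar q(t))$. Applying Lemma \ref{lemma_permutation_BR} with $q = q(t)$, $p = a(t+1)$ and $\epsilon = \epsilon_t$ yields $\bar a(t+1)\in BR^{\epsilon_t}(\bar q(t))$. Substituting this membership into the affine identity above gives
\begin{equation}
\bar q(t+1) \;\in\; (1-\gamma_t)\,\bar q(t) + \gamma_t\,BR^{\epsilon_t}(\bar q(t)),
\end{equation}
which is precisely \eqref{dt_centroid_process}; since $t\geq 1$ was arbitrary, this establishes the claim.

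The computation is essentially routine once the order of operations is right: the only substantive step is the reduction of the per-player best-response condition to a centroid-level best-response condition, i.e. Lemma \ref{lemma_permutation_BR}, which is where A.\ref{a_iden_interests} and the permutation-invariance properties $(iii)$--$(iv)$ of $\mathcal{C}$ are actually used. I would therefore present the averaging manipulation compactly and lean on Lemma \ref{lemma_permutation_BR} as the single nontrivial ingredient, noting in passing that $BR^{\epsilon_t}(\bar q(t))$ is a (product) set, so that \eqref{dt_centroid_process} is to be read as the assertion that the deterministic point $\bar q(t+1)$ belongs to the indicated set.
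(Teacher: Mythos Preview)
Your proof is correct and follows essentially the same approach as the paper: derive the affine recursion for $\bar q(t+1)$ by averaging \eqref{def_qt} within each class, then invoke Lemma~\ref{lemma_permutation_BR} to pass from $a(t+1)\in BR^{\epsilon_t}(\bar q(t))$ to $\bar a(t+1)\in BR^{\epsilon_t}(\bar q(t))$. Your write-up is in fact slightly more careful than the paper's, which writes the recursion with step size $\tfrac{1}{t+1}$ rather than the general $\gamma_t$ appearing in \eqref{dt_centroid_process}.
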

\begin{proof}
Note that $\bar q(t+1)$ may be written recursively as
$$\bar q(t+1) = \bar q(t) + \frac{1}{t+1}\left(\bar a(t+1) - \bar q(t)\right).$$
By Lemma \ref{lemma_permutation_BR}, $a(t+1) \in BR^{\epsilon_t}(\bar q(t))$ implies $\bar a(t+1) \in BR^{\epsilon_t}(\bar q(t))$. Substituting this into the above recursion and rearranging terms shows that $\{\bar q(t)\}$ follows the difference inclusion \eqref{dt_centroid_process}.
\end{proof}


\bibliographystyle{IEEEtran}
\bibliography{myRefs}

\begin{thebibliography}{10}
\providecommand{\url}[1]{#1}
\csname url@samestyle\endcsname
\providecommand{\newblock}{\relax}
\providecommand{\bibinfo}[2]{#2}
\providecommand{\BIBentrySTDinterwordspacing}{\spaceskip=0pt\relax}
\providecommand{\BIBentryALTinterwordstretchfactor}{4}
\providecommand{\BIBentryALTinterwordspacing}{\spaceskip=\fontdimen2\font plus
\BIBentryALTinterwordstretchfactor\fontdimen3\font minus
  \fontdimen4\font\relax}
\providecommand{\BIBforeignlanguage}[2]{{%
\expandafter\ifx\csname l@#1\endcsname\relax
\typeout{** WARNING: IEEEtran.bst: No hyphenation pattern has been}%
\typeout{** loaded for the language `#1'. Using the pattern for}%
\typeout{** the default language instead.}%
\else
\language=\csname l@#1\endcsname
\fi
#2}}
\providecommand{\BIBdecl}{\relax}
\BIBdecl

\bibitem{Brown51}
G.~W. Brown, \emph{{``Iterative Solutions of Games by Fictitious Play'' In
  Activity Analysis of Production and Allocation}}, T.~Coopmans, Ed.\hskip 1em
  plus 0.5em minus 0.4em\relax New York: Wiley, 1951.

\bibitem{marden06}
J.~R. Marden, G.~Arslan, and J.~S. Shamma, ``Joint strategy fictitious play
  with inertia for potential games,'' \emph{IEEE Transactions on Automatic
  Control}, vol.~54, no.~2, pp. 208--220, 2009.

\bibitem{Lambert01}
T.~J. Lambert, M.~A. Epelman, and R.~L. Smith, ``A fictitious play approach to
  large-scale optimization,'' \emph{Operations Research}, vol.~53, no.~3, pp.
  477--489, 2005.

\bibitem{Swenson-MFP-Asilomar-2012}
B.~Swenson, S.~Kar, and J.~Xavier, ``Distributed learning in large-scale
  multi-agent games: A modified fictitious play approach,'' in \emph{46th
  Asilomar Conference on Signals, Systems, and Computers}, Pacifc Grove, CA,
  USA, 2012, pp. 1490 -- 1495.

\bibitem{swenson2012ECFP}
------, ``Empirical centroid fictitious play: an approach for distributed
  learning in multi-agent games,'' 2012, submitted for journal publication,
  http://arxiv.org/abs/1304.4577.

\bibitem{leslie2006generalised}
D.~S. Leslie and E.~Collins, ``Generalised weakened fictitious play,''
  \emph{Games and Economic Behavior}, vol.~56, no.~2, pp. 285--298, 2006.

\bibitem{swenson2015CESFP}
B.~Swenson, S.~Kar, and J.~Xavier, ``A computationally efficient implementation
  of fictitious play for large-scale games,'' 2015, submitted for conference
  publication.

\bibitem{swenson2015StrongFP}
------, ``From weak learning to strong learning in fictitious play-type
  algorithms,'' 2015, submitted for journal publication.

\bibitem{benaim2005stochastic}
M.~Bena{\"\i}m, J.~Hofbauer, and S.~Sorin, ``Stochastic approximations and
  differential inclusions,'' \emph{SIAM Journal on Control and Optimization},
  vol.~44, no.~1, pp. 328--348, 2005.

\bibitem{marden-shamma2013}
J.~Marden and J.~Shamma, ``Game theory and distributed control,'' in
  \emph{Handbook of Game Theory Vol. 4}, H.~Young and S.~Zamir, Eds.\hskip 1em
  plus 0.5em minus 0.4em\relax Elsevier Science, 2013, forthcoming.

\bibitem{young2004strategic}
H.~P. Young, \emph{Strategic learning and its limits}.\hskip 1em plus 0.5em
  minus 0.4em\relax Oxford University Press on Demand, 2004, vol. 2002.

\bibitem{robinson1951iterative}
J.~Robinson, ``An iterative method of solving a game,'' \emph{The Annals of
  Mathematics}, vol.~54, no.~2, pp. 296--301, Sep. 1951.

\bibitem{miyasawa1961convergence}
K.~Miyasawa, ``On the convergence of the learning process in a 2 x 2
  non-zero-sum two-person game,'' DTIC Document, Tech. Rep., 1961.

\bibitem{Mond01}
D.~Monderer and L.~S. Shapley, ``Fictitious play property for games with
  identical interests,'' \emph{Journal of Economic Theory}, vol.~68, no.~1, pp.
  258--265, Jan. 1996.

\bibitem{Mond96}
------, ``{Potential Games},'' \emph{Games and Economic Behavior}, vol.~14,
  no.~1, pp. 124--143, May 1996.

\bibitem{swenson2013MCE}
B.~Swenson, S.~Kar, and J.~Xavier, ``Mean-centric equilibrium: an equilibrium
  concept for learning in large-scale games,'' in \emph{Proceedings of IEEE
  Global Conference on Signal and Information Processing (GlobalSIP), 2013},
  2013, pp. 571--574.

\bibitem{Cheng04}
S.-F. Cheng, D.~M. Reeves, Y.~Vorobeychik, and M.~P. Wellman, ``Notes on
  equilibria in symmetric games,'' in \emph{Proceedings of Workshop on Game
  Theory and Decision Theory}, 2004.

\bibitem{jordan1993}
J.~S. Jordan, ``{Three problems in learning mixed-strategy Nash equilibria},''
  \emph{Games and Economic Behavior}, vol.~5, no.~3, pp. 368--386, 1993.

\bibitem{fudenberg1998theory}
D.~Fudenberg and D.~K. Levine, \emph{The theory of learning in games}.\hskip
  1em plus 0.5em minus 0.4em\relax MIT press, 1998, vol.~2.

\bibitem{hirsch1976differential}
M.~W. Hirsch, \emph{Differential topology}.\hskip 1em plus 0.5em minus
  0.4em\relax Springer, 1976.

\end{thebibliography}

\end{document}